\documentclass[12pt,final,1p,times,review]{elsarticle}

\usepackage{amssymb}
\usepackage{amsthm}
\usepackage{hyperref}
\usepackage{amsmath}
\usepackage{amsmath,amssymb,amsopn,amsfonts,mathrsfs,amsbsy,amscd}
\usepackage{longtable}
\usepackage{newtxtext, newtxmath}
\usepackage{geometry}
\journal{???}

\newcommand{\e}{\epsilon}

\newtheorem{definition}{Definition}[section]
\newtheorem{theorem}{Theorem}[section]

\newtheorem{lemma}{Lemma}[section]

\numberwithin{equation}{section}

\begin{document}
\bibliographystyle{amsplain}
\begin{frontmatter}
\title{\textbf{Rota-Baxter Operators on Dual Quaternion Algebra}}

\author[label1]{Kamran Shakoor\corref{mycorrespondingauthor}}
\cortext[mycorrespondingauthor]{Kamran Shakoor}
\address[label1]{Abdus Salam School of Mathematical Sciences,\\ GC University, Lahore-Pakistan\\e-mail:
			kamranshakoor@sms.edu.pk}
	
\author[label2]{Azhar Farooq}
\address[label2]{Abdus Salam School of Mathematical Sciences,\\ GC University, Lahore-Pakistan\\e-mail:
			azhar.farooq@sms.edu.pk}
		
\author[label3]{Hassan Oubba}
\address[label3]{Universit\'e Hassan II,
			Facult\'e des sciences Ain Chock \\  D\'epartement de Math\'ematiques et Informatique\\
			B.P. 5366 Maarif, Casablanca, Maroc\\e-mail:
			hassan.oubba@edu.umi.ac.ma}

\begin{abstract}
The purpose of this paper is to determine all Rota-Baxter operators on dual quaternion algebra $\mathcal{H}_d$ over the reals.
\end{abstract}
\begin{keyword}
Rota-Baxter operator, Rota-Baxter algebra, Quaternion, Dual quaternion algebra, associative algebra.

{\it{ 2020 Mathematics Subject Classification:}} 11R52; 15A99; 17B38.
\end{keyword}
\end{frontmatter}

\section{Introduction} \label{Introduction}

Rota-Baxter algebras were first formulated by Glen E. Baxter \cite{Bax} in 1960, motivated by problems within probability theory. Later, it gained prominence through the foundational works of Rota \cite{Rota1, Rota2} and his collaborators. In recent decades, the Rota-Baxter algebraic framework has proven instrumental in resolving analytical and combinatorial challenges, while also finding broad applications across diverse fields of mathematics and mathematical physics (see \cite{Chu, Guo, Gao, Tang1, Tang2} and references therein).

Classifying Rota–Baxter operators on particular algebras stands as a central concern of the theory. Significant progress has been made for low-dimensional cases: Rota–Baxter operators on $2$ and $3$-dimensional algebras have been thoroughly investigated in \cite{An}, \cite{Guo2}, and \cite{Li}. For the second-order full-matrix algebra over complex fields with weight $0$, explicit classifications were derived using both standard computational methods and Gröbner basis techniques \cite{Tang2}. Furthermore, the four-dimensional Sweedler algebra, taken as an associative algebra, has its Rota–Baxter operators exhaustively characterized in \cite{Ma2,Ma1}.

Rota–Baxter operators of arbitrary weight on the split semiquaternion algebra have recently been fully classified by Chen and Deng \cite{chen}. Here, we pursue the corresponding classification for the dual quaternion algebra.

\section{Preliminaries}

\subsection{Dual Quaternion Algebra}

Quaternions, devised by Sir William Rowan Hamilton (1805–1865) in 1843 as an extension of the complex numbers, satisfy Hamilton's celebrated relation:
\[
i^2 = j^2 = k^2 = -1, \quad ijk = -1.
\]

Split quaternions, on the other hand, adhere to the conditions:
\[
i^2 = -1, \quad j^2 = k^2 = 1, \quad ijk = 1.
\]

For generalized quaternions, the parameters $\alpha$ and $\beta$ come into play, defining them as:
\[
i^2 = -\alpha, \quad j^2 = -\beta, \quad k^2 = -\alpha\beta, \quad ijk = -\alpha\beta.
\]

In 1873, William Kingdon Clifford (1845-1879) introduced dual quaternions \cite{CK} to seamlessly integrate rotations and translations while preserving the advantages of quaternion representations of rotations.

A dual quaternion $x$ takes the form of a linear combination:
\[
x = x_0e_0 + x_1e_1 + x_2e_2 + x_3e_3,
\]
where $x_0, x_1, x_2, x_3$ are real numbers and $e_0, e_1, e_2, e_3$ are basis elements. The quaternion sum follows standard componentwise addition and multiplication, is defined such that $e_0$ serves as the identity, while $e_1, e_2,e_3$ satisfy:
\begin{equation}\label{eq2.1}
e_i^2=0~~~\text{and}~~~e_i.e_j=0~~\text{for}~~1\leq i<j\leq 3.
\end{equation}
Denote the set of dual quaternions as $\mathcal{H}_d$:
$$
\mathcal{H}_d=\displaystyle{\lbrace x=\sum_{i=0}^3x_ie_i \vert
\hspace{0.2 cm}
x_0,x_1,x_2,x_3 \in \mathbb{R}, \hspace{0.2 cm} e_i^2=0,~i=1,2,3,~~e_i.e_j=0,~~1\leq i<j\leq 3 \rbrace}.
$$
We can also write $x=x_0e_0+\tilde{x}$, where $x_0e_0$ is the \textit{real part} of $x$ and $\tilde{x}=x_1e_1+x_2e_2+x_3e_3$, called the \textit{pure dual quaternion part} of $x$.\\
The quaternion product on $\mathcal{H}_d$ is defined by
\begin{equation}
x.y=(x_0y_0)e_0+(x_0y_1+y_0x_1)e_1+(x_0y_2+y_0x_2)e_2+(x_0y_3+y_0x_3)e_3
\end{equation}
for all $x,y \in \mathcal{H}_d$.\\

The set $\mathcal{H}_d$ constitutes a commutative ring under dual quaternion multiplication. It also serves as a vector space of dimension four over $\mathbb{R}$ with basis $\lbrace e_0, e_1, e_2, e_3 \rbrace$. One intriguing property of dual quaternions is their ability to express Galilean transformations through a single quaternion equation. For a more in-depth understanding of dual quaternions, we recommend referring to \cite{BA, oubd, MV}, and \cite{PP}.

We adopt the following conventions: $\mathbb{R}$ is the field of real numbers; all algebras and linear maps are over $\mathbb{R}$; and given a matrix $M$, $M^T$ denotes the transpose of $M$, and
$$
M(4) =
\begin{pmatrix}
M & 0 & 0 & 0 \\
0 & M & 0 & 0 \\
0 & 0 & M & 0 \\
0 & 0 & 0 & M
\end{pmatrix}
.
$$

\begin{definition}
For a fixed $\lambda\in \mathbb{R}$, a \textit{Rota-Baxter algebra of weight} $\lambda$ is defined as a pair $(A, \mathcal{R})$, where $A$ is an algebra and $\mathcal{R} \colon A \to A$ is a linear operator such that the \textit{Rota-Baxter equation}
\begin{equation}\tag{2.3}
\mathcal{R}(x)\mathcal{R}(y) = \mathcal{R}\big(\mathcal{R}(x)y\big) + \mathcal{R}\big(x\mathcal{R}(y)\big) + \lambda \mathcal{R}(xy), \quad \forall x, y \in A,
\end{equation}
holds. Then $\mathcal{R}$ is called a \textit{Rota-Baxter operator of weight} $\lambda$.
\end{definition}

\section{Rota-Baxter Operator on Dual Quaternion Algebra}
Let $\mathcal{R}$ be a linear transformation on $H_{d}$, and
\[
R = \begin{pmatrix}
a_{11} & a_{12} & a_{13} & a_{14} \\
a_{21} & a_{22} & a_{23} & a_{24} \\
a_{31} & a_{32} & a_{33} & a_{34} \\
a_{41} & a_{42} & a_{43} & a_{44}
\end{pmatrix}
= (\gamma_0, \gamma_1, \gamma_2, \gamma_3)
\]
the matrix of $\mathcal{R}$ with respect to the basis $\{e_0, e_1, e_2, e_3\}$. Notice easily that $\gamma_i$ is the coordinate of $\mathcal{R}$ with respect to the basis $\{e_0, e_1, e_2, e_3\}$. Thus we have
\[
\mathcal{R}(e_i) = (e_0, e_1, e_2, e_3)\gamma_i = \gamma_i^T
\begin{pmatrix}
e_0 \\
e_1 \\
e_2 \\
e_3
\end{pmatrix}.
\]
\begin{lemma}\label{lem1}
Let $\mathcal{R}$ be a linear transformation on $\mathcal{H}_{d}$. The following equation holds:
\begin{equation}\label{eq}
    \mathcal{R}(e_i)\mathcal{R}(e_j) = \gamma_i^T C \gamma_j(4)
    \begin{pmatrix}
    e_0 \\
    e_1 \\
    e_2 \\
    e_3
    \end{pmatrix},
\end{equation}
where
\[
C=\begin{pmatrix}
    1&0&0&0\\
    0&0&0&0\\
     0&0&0&0\\
      0&0&0&0\\
       0&1&0&0\\
        1&0&0&0\\
         0&0&0&0\\
          0&0&0&0\\
           0&0&1&0\\
            0&0&0&0\\
             1&0&0&0\\
              0&0&0&0\\
               0&0&0&1\\
                0&0&0&0\\
                 0&0&0&0\\
                  1&0&0&0\\

\end{pmatrix}^T
.
\]
\end{lemma}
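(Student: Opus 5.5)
We prove the identity by expanding both sides in the basis $\{e_0,e_1,e_2,e_3\}$ and comparing coefficients. The approach is to encode the multiplication table of $\mathcal{H}_d$ as a bilinear form with vector values, and to check that the matrix $C$ is exactly this encoding once $\gamma_j(4)$ is placed correctly.

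\textbf{Step 1: structure constants.} Write $\mathcal{R}(e_i)=\sum_{p=0}^3 a_{p+1,i+1}e_p$, so $\gamma_i=(a_{1,i+1},\dots,a_{4,i+1})^T$. By bilinearity,
\[
\mathcal{R}(e_i)\mathcal{R}(e_j)=\sum_{p,q=0}^3 a_{p+1,i+1}a_{q+1,j+1}\, e_pe_q .
\]
From the product formula and \eqref{eq2.1}, $e_0e_q=e_q$, $e_pe_0=e_p$, and $e_pe_q=0$ for $p,q\ge 1$. So define the row vectors $c_{pq}\in\mathbb{R}^4$ as the coordinate vectors of $e_pe_q$:
\begin{itemize}
\item $c_{00}=(1,0,0,0)$;
\item $c_{0q}=c_{q0}$ is the $q$-th unit vector, for $q\ge 1$;
\item all other $c_{pq}$ are zero.
\end{itemize}

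\textbf{Step 2: read off $C$.} As printed, $C$ is a $4\times 16$ matrix, being the transpose of a $16\times 4$ matrix. Split it into four $4\times 4$ blocks $C=(C_0\,|\,C_1\,|\,C_2\,|\,C_3)$, where block $C_q$ comes from rows $4q+1,\dots,4q+4$ of the displayed matrix before transposition.

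Row $p$ of block $C_q$ should be $c_{pq}$. Reading the listed rows confirms this:
\begin{itemize}
\item Block $q=0$ has rows $(1,0,0,0),0,0,0$, i.e.\ $c_{00},c_{10},c_{20},c_{30}$ with the last three zero.
\item Block $q=1$ has rows $(0,1,0,0),(1,0,0,0),0,0$.
\end{itemize}
Here a subtlety appears. We have $c_{01}=e_1$-coordinates $=(0,1,0,0)$, which matches. But $c_{11}$ should be $0$, whereas the second listed row is $(1,0,0,0)$. So the row-major reading is not the intended one, and the indexing convention must be pinned down first. This is the main obstacle.

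\textbf{Step 3: fix the convention and match coefficients.} I would interpret $\gamma_i^T C\gamma_j(4)$ as
\[
\gamma_i^T\,(C_0\gamma_j,\ C_1\gamma_j,\ C_2\gamma_j,\ C_3\gamma_j),
\]
that is, the block-diagonal $\gamma_j(4)$ acts on each $4$-column block, producing a $1\times4$ row. Its $r$-th entry is $\gamma_i^T C_r\gamma_j$, which should equal the $e_r$-coefficient of the product, namely $\sum_{p,q}a_{p+1,i+1}a_{q+1,j+1}(c_{pq})_r$.

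So $C_r$ must be the matrix with $(p,q)$ entry $(c_{pq})_r$. Under this reading:
\begin{itemize}
\item $C_0=E_{00}$, since only $e_0e_0$ contributes to $e_0$.
\item $C_r=E_{0r}+E_{r0}$ for $r\ge1$.
\end{itemize}
Here $E_{pq}$ denotes the matrix unit. Block $r=1$ then has rows $(0,1,0,0),(1,0,0,0),0,0$, exactly as printed. Blocks $r=2,3$ likewise match the listed rows $(0,0,1,0),0,(1,0,0,0),0$ and $(0,0,0,1),0,0,(1,0,0,0)$.

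\textbf{Step 4: conclude.} Multiplying the resulting row vector by $(e_0,e_1,e_2,e_3)^T$ recovers $\sum_r(\gamma_i^TC_r\gamma_j)e_r=\mathcal{R}(e_i)\mathcal{R}(e_j)$, which proves \eqref{eq}. Apart from settling the block/transpose convention in Step 2, which I expect to be the only real difficulty, every step is a direct bilinear expansion.
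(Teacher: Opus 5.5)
Your Steps 3--4 are correct and follow essentially the paper's own proof. The paper writes $\mathcal{R}(e_i)\mathcal{R}(e_j)=\gamma_i^T M\gamma_j$ with $M$ the multiplication table of $\mathcal{H}_d$, splits $M=C_0e_0+C_1e_1+C_2e_2+C_3e_3$ with exactly your blocks $C_r$ (indexed by the output basis element), and assembles them into $\gamma_i^TC\gamma_j(4)$. Two small fixes are needed:

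\begin{enumerate}
\item Delete the Step 2 detour. It also contains a slip: by your Step 1, $c_{10},c_{20},c_{30}$ are unit vectors, so the row-major reading already fails at block $0$, not only at block $1$.
\item When you identify the rows of $C_q$ with the printed rows, note that the transpose turns those printed rows into columns of $C_q$. The identification is legitimate only because each block is symmetric, and you should say so.
\end{enumerate}
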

\begin{proof}
For any $i$ and $j$, since
\[
\mathcal{R}(e_i)\mathcal{R}(e_j) = \gamma_i^T
\begin{pmatrix}
e_0 \\
e_1 \\
e_2 \\
e_3
\end{pmatrix}
(e_0, e_1, e_2, e_3)\gamma_j
= \gamma_i^T
\begin{pmatrix}
e_0 & e_1 & e_2 & e_3 \\
e_1 & 0 & 0 & 0 \\
e_2 & 0 & 0 & 0 \\
e_3 & 0 & 0 & 0
\end{pmatrix}
\gamma_j
\]
\[
= \gamma_i^T
\begin{pmatrix}
1 & 0 & 0 & 0 \\
0 & 0 & 0 & 0 \\
0 & 0 & 0 & 0 \\
0 & 0 & 0 & 0
\end{pmatrix}
\gamma_j e_0 + \gamma_i^T
\begin{pmatrix}
0 & 1 & 0 & 0 \\
1 & 0 & 0 & 0 \\
0 & 0 & 0 & 0 \\
0 & 0 & 0 & 0
\end{pmatrix}
\gamma_j e_1
\]
\[
+ \gamma_i^T
\begin{pmatrix}
0 & 0 & 1 & 0 \\
0 & 0 & 0 & 0 \\
1 & 0 & 0 & 0 \\
0 & 0 & 0 & 0
\end{pmatrix}
\gamma_j e_2 + \gamma_i^T
\begin{pmatrix}
0 & 0 & 0 & 1 \\
0 & 0 & 0 & 0 \\
0 & 0 & 0 & 0 \\
1 & 0 & 0 & 0
\end{pmatrix}
\gamma_j e_3
\]
\[
= \gamma_i^T C
\begin{pmatrix}
\gamma_j &  &  & \\
& \gamma_j &  & \\
&  & \gamma_j & \\
&  & & \gamma_j
\end{pmatrix}
\begin{pmatrix}
e_0 \\
e_1 \\
e_2 \\
e_3
\end{pmatrix},
\]
so this yields the equation (\ref{eq}).
\end{proof}

The proof of the following lemma is straightforward.
\begin{lemma}\label{lem2}
Let $\mathcal{R}$ be a linear transformation on $\mathcal{H}_{d}$. The following equations
hold:
\begin{align*}
\mathcal{R}(e_i)e_j &= \gamma_i^T E_j^T
\begin{pmatrix}
e_0 \\
e_1 \\
e_2 \\
e_3
\end{pmatrix}, \\
e_i \mathcal{R}(e_j) &= \gamma_j^T E_i^T
\begin{pmatrix}
e_0 \\
e_1 \\
e_2 \\
e_3
\end{pmatrix}, \\
\end{align*}
where
\begin{align*}
E_0 &=
\begin{pmatrix}
1 & 0 & 0 & 0 \\
0 & 1 & 0 & 0 \\
0 & 0 & 1 & 0 \\
0 & 0 & 0 & 1
\end{pmatrix}, &
E_1 &=
\begin{pmatrix}
0 & 0 & 0 & 0 \\
1 & 0 & 0 & 0 \\
0 & 0 & 0 & 0 \\
0 & 0 & 0 & 0
\end{pmatrix}, \\
E_2 &=
\begin{pmatrix}
0 & 0 & 0 & 0 \\
0 & 0 & 0 & 0 \\
1 & 0 & 0 & 0 \\
0 & 0 & 0 & 0
\end{pmatrix}, &
E_3 &=
\begin{pmatrix}
0 & 0 & 0 & 0 \\
0 & 0 & 0 & 0 \\
0 & 0 & 0 & 0 \\
1 & 0 & 0 & 0
\end{pmatrix}.
\end{align*}
\end{lemma}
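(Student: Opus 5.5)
The plan is a direct computation on basis elements, written in the same matrix notation as Lemma \ref{lem1}. Since $\mathcal{R}(e_i)=\gamma_i^T(e_0,e_1,e_2,e_3)^T$, bilinearity of the product gives
\[
\mathcal{R}(e_i)e_j=\gamma_i^T\begin{pmatrix} e_0e_j\\ e_1e_j\\ e_2e_j\\ e_3e_j\end{pmatrix}.
\]
It therefore suffices to show that the column vector $(e_0e_j,e_1e_j,e_2e_j,e_3e_j)^T$ equals $E_j^T(e_0,e_1,e_2,e_3)^T$ for each $j$.

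I will check this in two cases using the multiplication table (\ref{eq2.1}).

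For $j=0$, $e_0$ is the identity, so the vector is $(e_0,e_1,e_2,e_3)^T=E_0^T(e_0,e_1,e_2,e_3)^T$, since $E_0$ is the identity matrix.

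For $j\in\{1,2,3\}$, we have $e_0e_j=e_j$. Also $e_ke_j=0$ for $k\ge 1$, by $e_k^2=0$ and $e_ke_j=0$ for $k\neq j$ (using commutativity for $k>j$). So the vector is $(e_j,0,0,0)^T$. On the other hand, $E_j$ has a single nonzero entry, a $1$ in position $(j,0)$, counting rows and columns from $0$. Hence $E_j^T$ has its single $1$ in position $(0,j)$, and $E_j^T(e_0,e_1,e_2,e_3)^T=(e_j,0,0,0)^T$. The two vectors agree, which proves the first identity.

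For the second identity, note that $\mathcal{H}_d$ is commutative, as stated in the preliminaries. Hence $e_i\mathcal{R}(e_j)=\mathcal{R}(e_j)e_i$. Applying the first identity with the roles of $i$ and $j$ swapped gives $\gamma_j^TE_i^T(e_0,e_1,e_2,e_3)^T$, as claimed.

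There is no real obstacle here. The only point needing care is indexing: the rows and columns of $E_j$ are labelled $0$ to $3$, and the transpose must be tracked correctly so that the product picks out the $e_0$-coordinate $a_{1,i+1}$ of $\gamma_i$ times $e_j$.
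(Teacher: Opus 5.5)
Your proof is correct and takes essentially the same approach as the paper. The paper omits the proof as straightforward, and the evident intended argument is yours: expand $\mathcal{R}(e_i)$ in the basis as in Lemma \ref{lem1}, use the multiplication table to get $(e_0e_j,e_1e_j,e_2e_j,e_3e_j)^T=E_j^T(e_0,e_1,e_2,e_3)^T$, and invoke commutativity of $\mathcal{H}_d$ for the second identity.
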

We now state our main theorem, which is a consequence of Lemmas \ref{lem1} and \ref{lem2}:
\begin{theorem}\label{th}
$\mathcal{R}$ is a Rota-Baxter operator of weight $\lambda$ on $\mathcal{H}_{d}$ if and only if the column vectors $\gamma_i$ of $R$ and $R$ satisfy the following equations:
$$
\gamma_i(4)^T C^T R = RE_iR + R(E_0, E_1, E_2, E_3)\gamma_i(4) + \lambda R E_i ,
$$
where $C$ and $E_i$ ($i = 0, 1, 2, 3$) are defined in Lemmas \ref{lem1} and \ref{lem2}.
\end{theorem}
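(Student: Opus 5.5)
The plan is to reduce the Rota--Baxter equation (2.3) to its values on basis pairs and then to read off coordinates. Since both sides of (2.3) are bilinear in $(x,y)$, $\mathcal{R}$ is a Rota--Baxter operator of weight $\lambda$ if and only if
\[
\mathcal{R}(e_i)\mathcal{R}(e_j)=\mathcal{R}\big(\mathcal{R}(e_i)e_j\big)+\mathcal{R}\big(e_i\mathcal{R}(e_j)\big)+\lambda\mathcal{R}(e_ie_j),\qquad 0\le i,j\le 3 .
\]
For fixed $i$ I collect these sixteen vector identities into one $4\times 4$ matrix identity. The $j$-th column of that matrix is the coordinate vector, in the basis $\{e_0,e_1,e_2,e_3\}$, of the corresponding side.

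\textbf{Left side.} Lemma \ref{lem1} gives $\mathcal{R}(e_i)\mathcal{R}(e_j)=\gamma_i^TC\gamma_j(4)(e_0,\dots,e_3)^T$, so its coordinate column is $\gamma_j(4)^TC^T\gamma_i$. The product is commutative, so I can swap the roles of $i$ and $j$ and take the column to be $\gamma_i(4)^TC^T\gamma_j$. Letting $j$ run over $0,\dots,3$, the columns $\gamma_j$ assemble into $R$, and the matrix of left sides is $\gamma_i(4)^TC^TR$.

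\textbf{Right side.} I treat the three terms separately, using $\mathcal{R}(\sum_k c_ke_k)=Rc$ for any coordinate column $c$.
\begin{itemize}
\item By Lemma \ref{lem2}, $\mathcal{R}(e_i)e_j$ has coordinates $E_j\gamma_i$. Applying $\mathcal{R}$ gives $RE_j\gamma_i$. By commutativity this equals the coordinates of $\mathcal{R}(e_j e_i)$-type terms with indices swapped, so the column can also be written $RE_i\gamma_j$. Over all $j$ this gives $RE_iR$.
\item By Lemma \ref{lem2}, $e_i\mathcal{R}(e_j)$ has coordinates $E_i\gamma_j$, and applying $\mathcal{R}$ gives $RE_i\gamma_j$. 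After the same reindexing I will write it as $RE_j\gamma_i$. Over all $j$, the block matrix $(E_0,E_1,E_2,E_3)\gamma_i(4)$ has $j$-th block column $E_j\gamma_i$, so this term is $R(E_0,E_1,E_2,E_3)\gamma_i(4)$.
\item Since $e_ie_j=E_i$ applied to $e_j$'s coordinate vector (direct check: $e_0e_j=e_j$, $e_ie_0=e_i$, $e_ie_j=0$ otherwise), the weight term contributes the column $RE_i\epsilon_j$. Over all $j$ this is $RE_i$.
\end{itemize}

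\textbf{Main obstacle.} The only delicate point is bookkeeping: checking that each term lands in the form stated, with the correct placement of transposes, once $i$ is fixed and $j$ varies. Commutativity of $\mathcal{H}_d$ is what lets me pair the first two terms with $RE_iR$ and $R(E_0,\dots,E_3)\gamma_i(4)$ respectively, so I will verify that pairing explicitly. I will also verify the transposition $(\gamma_i^TC\gamma_j(4))^T=\gamma_j(4)^TC^T\gamma_i$ and its symmetric version.

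Once these are checked, equality of all sixteen coordinate identities is exactly the stated family of matrix equations for $i=0,\dots,3$. This proves both directions.
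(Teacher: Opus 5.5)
Your overall strategy is the paper's: reduce (2.3) to the sixteen basis pairs, read off coordinates with Lemmas~\ref{lem1} and~\ref{lem2}, and assemble one index into columns. You handle the left-hand side and the weight term correctly, and the matrix identity you end with is the right one.

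The flaw is in your first two bullets, which is exactly the bookkeeping you flag as delicate. Commutativity does not let you rewrite $RE_j\gamma_i$ as $RE_i\gamma_j$. It only reverses factors, $\mathcal{R}(e_i)e_j=e_j\mathcal{R}(e_i)$, and both orders have coordinates $E_j\gamma_i$. The terms $\mathcal{R}(\mathcal{R}(e_i)e_j)$ and $\mathcal{R}(e_i\mathcal{R}(e_j))$ are not separately symmetric in $i,j$; the swap $i\leftrightarrow j$ exchanges them. Concretely, for $i=0$, $j=1$, one has $\mathcal{R}(\mathcal{R}(e_0)e_1)=a_{11}\mathcal{R}(e_1)$, with coordinates $RE_1\gamma_0=a_{11}\gamma_1$. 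By contrast, $RE_0\gamma_1=R\gamma_1$ are the coordinates of $\mathcal{R}(\mathcal{R}(e_1))$. So the pairing you announce (first term with $RE_iR$, second with $R(E_0,E_1,E_2,E_3)\gamma_i(4)$) is backwards for the equation at $(e_i,e_j)$. The explicit verification you promise would fail term by term. Your final sum survives only because the two false rewritings cancel.

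The repair is easy, in either of two ways.

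\textbf{Keep the pair $(e_i,e_j)$ and match without reindexing.} $\mathcal{R}(\mathcal{R}(e_i)e_j)$ has coordinates $RE_j\gamma_i$, the $j$-th column of $R(E_0,E_1,E_2,E_3)\gamma_i(4)$. $\mathcal{R}(e_i\mathcal{R}(e_j))$ has coordinates $RE_i\gamma_j$, the $j$-th column of $RE_iR$. Commutativity (equivalently, the symmetry of the $4\times 4$ blocks of $C$) is then needed only on the left, where you already used it correctly.

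\textbf{Put the whole equation at $(e_j,e_i)$ in column $j$.} This is what the paper effectively does by fixing the second argument and letting the first vary. Lemma~\ref{lem1} then gives $\gamma_i(4)^TC^T\gamma_j$ with no symmetry argument. $\mathcal{R}(\mathcal{R}(e_j)e_i)$ gives $RE_i\gamma_j$, $\mathcal{R}(e_j\mathcal{R}(e_i))$ gives $RE_j\gamma_i$, and $e_je_i$ has coordinates $E_i\epsilon_j$.

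In both versions, any interchange of $i$ and $j$ must be applied to the whole equation, never to individual terms.
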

\begin{proof}
Using Lemma \ref{lem2}, we obtain
\[
\mathcal{R}(\mathcal{R}(e_i)e_j) = \gamma_i^T E_j^T R^T
\begin{pmatrix}
e_0 \\
e_1 \\
e_2 \\
e_3
\end{pmatrix},
\]
\[
\mathcal{R}(e_i \mathcal{R}(e_j)) = \gamma_j^T E_i^T R^T
\begin{pmatrix}
e_0 \\
e_1 \\
e_2 \\
e_3
\end{pmatrix}.
\]
$\mathcal{R}$ is a Rota-Baxter operator with weight $\lambda$ if and only if for any $i, j = 0, 1, 2, 3$, the following equation holds:
\[
\mathcal{R}(e_i)\mathcal{R}(e_j) = \mathcal{R}\big(\mathcal{R}(e_i)e_j\big) + \mathcal{R}\big(e_i\mathcal{R}(e_j)\big) + \lambda \mathcal{R}(e_i e_j).
\]
For $j = 0$, we obtain from Lemma \ref{lem1} that
\[
\gamma_i^T C \gamma_0 (4) = \gamma_i^T R^T + \gamma_0^T E_i^T R^T + \lambda \gamma_i^T.
\]
This amounts to the first equality of Theorem.\\
When $j = 1$, Lemma \ref{lem1} gives
\[
\gamma_i^T C \gamma_1(4)
\begin{pmatrix}
e_0 \\
e_1 \\
e_2 \\
e_3
\end{pmatrix}
= \gamma_i^T E_1^T R^T
\begin{pmatrix}
e_0 \\
e_1 \\
e_2 \\
e_3
\end{pmatrix}
+ \gamma_1^T E_i^T R^T \begin{pmatrix}
e_0 \\
e_1 \\
e_2 \\
e_3
\end{pmatrix}
+ \lambda \mathcal{R}(e_ie_1).
\]
For $i=0, 1, 2, 3$, respectively, we have
\begin{align*}
\gamma_0^T C \gamma_1(4)&= \gamma_0^T E_1^T R^T + \gamma_1^T R^T + \lambda \gamma_1^T, \\
\end{align*}
\begin{align*}
\gamma_1^T C \gamma_1(4)&= 2\gamma_1^T E_1^T R^T, \\
\end{align*}
\begin{align*}
\gamma_2^T C \gamma_1(4)&= (\gamma_2^T E_1^T + \gamma_1^T E_2^T) R^T, \\
\end{align*}
\begin{align*}
\gamma_3^T C \gamma_1(4)&= (\gamma_3^T E_1^T + \gamma_1^T E_3^T) R^T, \\
\end{align*}
which is precisely the second equality in Theorem.\\
When $j = 2$, Lemma \ref{lem1} gives
\[
\gamma_i^T C \gamma_2(4)
\begin{pmatrix}
e_0 \\
e_1 \\
e_2 \\
e_3
\end{pmatrix}
= \gamma_i^T E_2^T R^T
\begin{pmatrix}
e_0 \\
e_1 \\
e_2 \\
e_3
\end{pmatrix}
+ \gamma_2^T E_i^T R^T \begin{pmatrix}
e_0 \\
e_1 \\
e_2 \\
e_3
\end{pmatrix}
+ \lambda \mathcal{R}(e_ie_2).
\]
For $i=0, 1, 2, 3$, respectively, we have
\begin{align*}
\gamma_0^T C \gamma_2(4)&= \gamma_0^T E_2^T R^T + \gamma_2^T R^T + \lambda \gamma_2^T, \\
\end{align*}
\begin{align*}
\gamma_1^T C \gamma_2(4)&= (\gamma_1^T E_2^T + \gamma_2^T E_1^T) R^T, \\
\end{align*}
\begin{align*}
\gamma_2^T C \gamma_2(4)&= 2\gamma_2^T E_2^T R^T, \\
\end{align*}
\begin{align*}
\gamma_3^T C \gamma_2(4)&= (\gamma_3^T E_2^T + \gamma_2^T E_3^T) R^T, \\
\end{align*}
which coincides with the third equality in Theorem.\\
When $j = 3$, from Lemma\ref{lem1}, one has
\[
\gamma_i^T C \gamma_3(4)
\begin{pmatrix}
e_0 \\
e_1 \\
e_2 \\
e_3
\end{pmatrix}
= \gamma_i^T E_3^T R^T
\begin{pmatrix}
e_0 \\
e_1 \\
e_2 \\
e_3
\end{pmatrix}
+ \gamma_3^T E_i^T R^T \begin{pmatrix}
e_0 \\
e_1 \\
e_2 \\
e_3
\end{pmatrix}
+ \lambda \mathcal{R}(e_ie_3).
\]
For $i=0, 1, 2, 3$, respectively, we have
\begin{align*}
\gamma_0^T C \gamma_3(4)&= \gamma_0^T E_3^T R^T + \gamma_3^T R^T + \lambda \gamma_3^T, \\
\end{align*}
\begin{align*}
\gamma_1^T C \gamma_3(4)&= (\gamma_1^T E_3^T + \gamma_3^T E_1^T) R^T, \\
\end{align*}
\begin{align*}
\gamma_2^T C \gamma_3(4)&= (\gamma_2^T E_3^T + \gamma_3^T E_2^T) R^T, \\
\end{align*}
\begin{align*}
\gamma_3^T C \gamma_3(4)&= 2\gamma_3^T E_3^T R^T, \\
\end{align*}
which is exactly the fourth equality in Theorem.
\end{proof}

\section{A Complete Characterization of Rota-Baxter Operators on $\mathcal{H}_d$}

This section is devoted to classifying all Rota–Baxter operators on $\mathcal{H}_d$ with non-zero weight $\lambda$. We begin by determining all $R$ that satisfy the matrix equations in Theorem~\ref{th}. By Theorem~\ref{th}, this matrix system reduces to the following system of equations:

\begin{eqnarray}
    a_{11}^2 +\lambda a_{11} &=& 0, \label{e1} \\
    2a_{22}a_{21} + 2a_{23}a_{31} + 2a_{24}a_{41}+\lambda a_{21} &=& 0, \label{e2} \\
    2a_{32}a_{21} + 2a_{33}a_{31} + 2a_{34}a_{41}+\lambda a_{31} &=& 0, \label{e3} \\
    2a_{42}a_{21} + 2a_{43}a_{31} + 2a_{44}a_{41}+\lambda a_{41} &=& 0, \label{e4}\\
    a_{22}^2+a_{23}a_{32}+a_{24}a_{42}+\lambda a_{22}&=&0, \label{e6}\\
    a_{32}a_{22}+a_{33}a_{32}+a_{34}a_{42}+\lambda a_{32}&=&0, \label{e7}\\
    a_{42}a_{22}+a_{43}a_{32}+a_{44}a_{42}+\lambda a_{42}&=&0, \label{e8}\\
    a_{22}a_{23}+a_{23}a_{33}+a_{24}a_{43}+\lambda a_{23}&=&0, \label{e10}\\
    a_{32}a_{23}+a_{33}^2+a_{34}a_{43}+\lambda a_{33}&=&0, \label{e11}\\
    a_{42}a_{23}+a_{43}a_{33}+a_{44}a_{43}+\lambda a_{43}&=&0, \label{e12}\\
    a_{22}a_{24}+a_{23}a_{34}+a_{24}a_{44}+\lambda a_{24}&=&0, \label{e14}\\
    a_{32}a_{24}+a_{33}a_{34}+a_{34}a_{44}+\lambda a_{34}&=&0, \label{e15}\\
    a_{42}a_{24}+a_{43}a_{34}+a_{44}^2+\lambda a_{44}&=&0, \label{e16}\\
    a_{12}&=&0, \label{e21}\\
    a_{13}&=&0, \label{e41}\\
    a_{14} &=&0, \label{e56}
\end{eqnarray}
From the equations \(a_{12}=a_{13}=a_{14}=0\), write \(R\) in block form
\[
R=\begin{pmatrix}
\alpha & \mathbf{0}^T\\
\mathbf b & A
\end{pmatrix},
\qquad
\alpha=a_{11},\quad
\mathbf b=\begin{pmatrix}a_{21}\\a_{31}\\a_{41}\end{pmatrix},\quad
A=\begin{pmatrix}
a_{22}&a_{23}&a_{24}\\
a_{32}&a_{33}&a_{34}\\
a_{42}&a_{43}&a_{44}
\end{pmatrix}.
\]
The given system is equivalent to the following block equations:
\begin{eqnarray}
\alpha^2+\lambda\alpha=0, \label{e26}\\
2A\mathbf b+\lambda \mathbf b=0, \label{e27}\\
A^2+\lambda A=0. \label{e28}
\end{eqnarray}
Indeed, equation (\ref{e27}) comes from equations (\ref{e2})-(\ref{e4}), while (\ref{e28}) comes from (\ref{e6})–(\ref{e16}).

\medskip

This leads to the following two cases:

\medskip

\noindent\textbf{Case 1}: Consider \(\lambda=0\). Then (\ref{e26}) gives \(\alpha=0\). Furthermore, equation (\ref{e27}) gives
$
A\mathbf b=0
$
and (\ref{e28}) reduces to
$
A^2=0.
$

\bigskip

This leads to one of the main results of this section.
\begin{theorem}
$\mathcal{R}$ is a Rota-Baxter operator of weight $0$ on $\mathcal{H}_d$ if and only if the matrix corresponding to $\mathcal{R}$ has the following form:
\[
R=\begin{pmatrix}
0 & \mathbf{0}^T\\
\mathbf b & A
\end{pmatrix},
\qquad
A^2=0,\quad A\mathbf b=0,
\]
where \(A\) is a \(3\times 3\) matrix and \(\mathbf b=(b_1,b_2,b_3)^T\).
\end{theorem}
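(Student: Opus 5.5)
The plan is to argue directly from the multiplication rule of $\mathcal{H}_d$ in block coordinates rather than expand the full system (\ref{e1})--(\ref{e56}). This keeps the weight-$0$ case transparent and also serves as a check on the reductions (\ref{e26})--(\ref{e28}). Write every element as $x=x_0e_0+u$ with $u=\tilde{x}$ pure, identified with the pair $(x_0,u)\in\mathbb{R}\times\mathbb{R}^3$. By (\ref{eq2.1}) the product is
\[
(x_0,u)(y_0,v)=(x_0y_0,\;x_0v+y_0u),
\]
so in particular any product of two pure elements vanishes. The column convention for $R$ gives $\mathcal{R}(e_0)=(a_{11},\mathbf b)$. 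For $i=1,2,3$, $\mathcal{R}(e_i)=(a_{1,i+1},\,A\mathbf e_i)$, where $\mathbf e_i$ is the $i$-th standard basis vector of $\mathbb{R}^3$. Since the Rota-Baxter identity (2.3) is bilinear in $(x,y)$, it suffices to test it on pairs of basis vectors for necessity. It suffices to test it on general $x,y$ written as pairs for sufficiency.

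For necessity, with $\lambda=0$, I will evaluate (2.3) on three types of basis pairs.

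\emph{Pure--pure diagonal pair.} Take $x=y=e_i$ with $i\ge 1$ and write $\mathcal{R}(e_i)=(c,w)$. Since $e_i^2=0$, the identity reads $\mathcal{R}(e_i)^2=2\mathcal{R}(\mathcal{R}(e_i)e_i)$. The left side is $(c^2,2cw)$. We have $\mathcal{R}(e_i)e_i=c\,e_i$, so the right side is $2c\,\mathcal{R}(e_i)=(2c^2,2cw)$. Comparing real parts gives $c=0$, hence $a_{12}=a_{13}=a_{14}=0$, which is the block form.

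\emph{The pair $x=y=e_0$.} We have $\mathcal{R}(e_0)^2=(\alpha^2,2\alpha\mathbf b)$. On the other side, $2\mathcal{R}(\mathcal{R}(e_0))=2(\alpha^2,\alpha\mathbf b+A\mathbf b)$. The real parts give $\alpha=0$, and then the pure parts give $A\mathbf b=0$.

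\emph{Mixed pair.} Take $x=e_0$, $y=e_i$ with $i\ge 1$. Now $\mathcal{R}(e_0)=(0,\mathbf b)$ and $\mathcal{R}(e_i)=(0,A\mathbf e_i)$ are both pure, so their product is zero. Also $\mathcal{R}(e_0)e_i$ is a product of pure elements, so it is $0$. Finally, $e_0\mathcal{R}(e_i)=(0,A\mathbf e_i)$. The identity therefore becomes $0=\mathcal{R}((0,A\mathbf e_i))=(0,A^2\mathbf e_i)$. Letting $i$ range over $1,2,3$ gives $A^2=0$. The remaining pairs (the reversed mixed pair, and pure $e_i,e_j$) give nothing new once these conditions hold.

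For sufficiency, assume $R$ has the stated form. Then $\mathcal{R}(x)=(0,\,x_0\mathbf b+Au)$ is always pure, so $\mathcal{R}(x)\mathcal{R}(y)=0$. Next, $\mathcal{R}(x)y=(0,\,y_0(x_0\mathbf b+Au))$. Hence
\[
\mathcal{R}(\mathcal{R}(x)y)=(0,\,y_0(x_0A\mathbf b+A^2u))=0,
\]
and symmetrically $\mathcal{R}(x\mathcal{R}(y))=0$. The weight term is absent, so (2.3) holds with $\lambda=0$.

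The only step needing care is the bookkeeping of the column convention, so that ``first row of $R$'' correctly matches the $e_0$-components of $\mathcal{R}(e_1),\mathcal{R}(e_2),\mathcal{R}(e_3)$. I expect no genuine obstacle, since this argument is an independent derivation of the weight-$0$ specialization of (\ref{e26})--(\ref{e28}) from Theorem~\ref{th}.
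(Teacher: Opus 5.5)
Your proof is correct and is essentially the paper's argument. The paper also evaluates the Rota--Baxter identity on pairs of basis vectors (packaged as the matrix identities of Lemmas~\ref{lem1}, \ref{lem2} and Theorem~\ref{th}), reduces to $a_{12}=a_{13}=a_{14}=0$, $\alpha^2+\lambda\alpha=0$, $2A\mathbf b+\lambda\mathbf b=0$, $A^2+\lambda A=0$, and then sets $\lambda=0$. Your block-coordinate computation reaches the same conditions without that machinery, and it adds three things the paper leaves implicit: it shows where $a_{12}=a_{13}=a_{14}=0$ comes from, it gets that before reading $\alpha=0$ off the real part of the $(e_0,e_0)$ identity, and it checks sufficiency directly for arbitrary $x,y$.
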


\bigskip

\noindent\textbf{Case 2:} Consider $\lambda \neq 0$. From equation (\ref{e26}), we get
$
\alpha \in \{0,\,-\lambda\}.
$
Furthermore, equation (\ref{e28}) implies that the eigenvalues of $A$ lie in $\{0,-\lambda\}$. Consequently, the matrix $2A+\lambda I$ is invertible, since its eigenvalues are $\lambda$ and $-\lambda$, both nonzero.
Thus, the equation
\[
2A\mathbf b+\lambda \mathbf b=(2A+\lambda I)\mathbf b=0
\]
then forces
$
\mathbf b=0.
$

\medskip

The preceding reduction brings us to the second main theorem of this section, which we now state.
\begin{theorem}
$\mathcal{R}$ is a Rota-Baxter operator of weight $\lambda\neq 0$ on $\mathcal{H}_d$ if and
only if the matrix corresponding to $\mathcal{R}$ has the following form:
\[
R=
\begin{pmatrix}
\alpha & \mathbf{0}^T\\
\mathbf 0 & A
\end{pmatrix},
\qquad
\alpha \in \{0,\,-\lambda\},
\qquad
A^2+\lambda A=0.
\]
\end{theorem}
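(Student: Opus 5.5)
The plan is to work directly from the Rota–Baxter identity on basis elements rather than relying only on the matrix form of Theorem~\ref{th}. I will first double-check the block system (\ref{e26})–(\ref{e28}) together with $a_{12}=a_{13}=a_{14}=0$. Write $\mathcal{R}(e_0)=\alpha e_0+\sum_k b_k e_k$ and $\mathcal{R}(e_j)=\sum_k a_{k+1,j+1}e_k$ for $j\ge1$; here I temporarily allow a first-row entry $c_j=a_{1,j+1}$ in the $e_0$-component of $\mathcal{R}(e_j)$.

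\textbf{Step 1: the first row vanishes.} Take $x=e_0$, $y=e_j$ with $j\ge1$. Since $e_0$ is the identity, the identity reads
\[\mathcal{R}(e_0)\mathcal{R}(e_j)=\mathcal{R}(\mathcal{R}(e_0)e_j)+\mathcal{R}(\mathcal{R}(e_j))+\lambda\mathcal{R}(e_j).\]
I will compare $e_0$-components on the two sides. This gives an equation in $\alpha$ and the $c$'s; the products $e_ie_j=0$ kill most terms. Combining it with the pair $x=e_i$, $y=e_j$ with $i,j\ge1$ (where $e_ie_j=0$, so $\mathcal{R}(e_i)\mathcal{R}(e_j)=\mathcal{R}(\mathcal{R}(e_i)e_j)+\mathcal{R}(e_i\mathcal{R}(e_j))$) should force $c_j=0$. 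Specifically, the $e_0$-component on the left is $c_ic_j$. On the right, $\mathcal{R}(e_i)e_j=c_ie_j$ and $e_i\mathcal{R}(e_j)=c_je_i$, so the right side is $c_i\mathcal{R}(e_j)+c_j\mathcal{R}(e_i)$, whose $e_0$-component is $2c_ic_j$. Taking $i=j$ gives $c_j^2=2c_j^2$, hence $c_j=0$. This is exactly (\ref{e21})–(\ref{e56}).

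\textbf{Step 2: the block equations.} With $c=0$, the pure part $V=\mathrm{span}(e_1,e_2,e_3)$ is $\mathcal{R}$-invariant, $V\cdot V=0$, and $A$ is the matrix of $\mathcal{R}|_V$. The pairs $(i,j)$ with $i,j\ge1$ are then trivially satisfied, since both sides are $0$.

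For $x=e_0$ and $y=v\in V$: the left side is $\alpha A v$, and the right side is $\mathcal{R}(\alpha v)+A^2v+\lambda Av$. This gives $A^2+\lambda A=0$, i.e.\ (\ref{e28}).

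For $x=y=e_0$: the left side is $\mathcal{R}(e_0)^2=\alpha^2e_0+2\alpha\mathbf b$. The right side is $2\mathcal{R}(\mathcal{R}(e_0))+\lambda\mathcal{R}(e_0)=2\alpha(\alpha e_0+\mathbf b)+2A\mathbf b+\lambda(\alpha e_0+\mathbf b)$. Comparing $e_0$-components gives $\alpha^2+\lambda\alpha=0$ (after the $2\alpha^2$ cancels against $\alpha^2$, which I must redo carefully). Comparing $V$-components gives $2A\mathbf b+\lambda\mathbf b=0$. These are (\ref{e26}) and (\ref{e27}). Checking signs and constants in this bookkeeping is the step I expect to be the main obstacle; everything else is structural.

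\textbf{Step 3: conclude.} Since $\lambda\ne0$, (\ref{e26}) gives $\alpha\in\{0,-\lambda\}$. From (\ref{e28}), the polynomial $t(t+\lambda)$ annihilates $A$, so every eigenvalue of $A$ (over $\mathbb{C}$) lies in $\{0,-\lambda\}$. Hence $2A+\lambda I$ has eigenvalues in $\{\lambda,-\lambda\}$ and is invertible, and (\ref{e27}) forces $\mathbf b=0$. This proves the necessity.

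For the converse, take $R$ of the stated block form. The operator then preserves $\mathbb{R}e_0$ and $V$, and the three identities above, together with the trivial $V\times V$ case, verify the Rota–Baxter equation on all basis pairs. Bilinearity then extends it to all $x,y$.
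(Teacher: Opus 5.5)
Your proof is correct. Its concluding step is exactly the paper's: the eigenvalues of $A$ lie in $\{0,-\lambda\}$, so $2A+\lambda I$ is invertible and $\mathbf b=0$. The difference is in how you obtain the scalar system $a_{12}=a_{13}=a_{14}=0$, $\alpha^2+\lambda\alpha=0$, $(2A+\lambda I)\mathbf b=0$, $A^2+\lambda A=0$.

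The paper encodes the multiplication in the structure matrices $C$ and $E_i$ (Lemmas \ref{lem1} and \ref{lem2}). It packages the Rota--Baxter identity on basis pairs as the matrix equations of Theorem \ref{th}, and then simply asserts that these reduce to the listed system. You instead use the splitting $\mathcal{H}_d=\mathbb{R}e_0\oplus V$ with $V\cdot V=0$, and evaluate the identity by hand on the pairs $(e_j,e_j)$, $(e_0,v)$ and $(e_0,e_0)$.

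Your route is shorter and actually exhibits the reduction the paper leaves implicit. The pair $(e_j,e_j)$ gives $c_j^2=2c_j^2$, which is the clean reason the first row vanishes. The converse also becomes a direct check instead of a return to the matrix system. The paper's formalism is a general device that would apply verbatim to any four-dimensional algebra given its structure constants, but for this particular algebra it buys nothing your computation lacks.

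A few minor points:
\begin{itemize}
\item The bookkeeping you were worried about is right. The $e_0$-components give $\alpha^2=2\alpha^2+\lambda\alpha$, and the $V$-components give $2\alpha\mathbf b=2\alpha\mathbf b+2A\mathbf b+\lambda\mathbf b$.
\item The $(e_0,e_j)$ pair in Step 1 is not needed.
\item The pair $(v,e_0)$, which you omit in the converse, is covered by commutativity of $\mathcal{H}_d$: this makes the Rota--Baxter identity symmetric in $x$ and $y$.
\item You can avoid complex eigenvalues altogether by noting that $(2A+\lambda I)^2=4(A^2+\lambda A)+\lambda^2 I=\lambda^2 I$.
\end{itemize}
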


\vspace{1cm}

\textbf{Word Count:} Approximately $2100$ words (excluding references and equations).






\end{document}